\newtheorem{theorem}{Theorem}
\newtheorem{conjecture}[theorem]{Conjecture}
\newtheorem{corollary}[theorem]{Corollary}
\newenvironment{proof}[1][Proof]{\noindent\textbf{#1.} }{\ \rule{0.5em}{0.5em}}
\begin{document}

\title{An Interesting Identity}
\author{Brett Pansano \\
Northwest Arkansas Community College}
\maketitle

\begin{abstract}
This purpose of this paper is to note an interesting identity derived from
an integral in Gradshteyn and Ryzhik using techniques from George
Boros'(deceased) Ph.D thesis. The idenity equates a sum to a product by
evaluating an integral in two different ways. A more general form of the
idenity is left for further investigation.
\end{abstract}

\section{Introduction}

\begin{theorem}
Let n be non-negative integer. Then 
\[
\sum_{k=0}^{n}\left( -1\right) ^{k}\binom{n}{k}\frac{1}{nk+n-1}%
=n^{n}n!\dprod\limits_{k=0}^{n}\frac{1}{nk+n-1}. 
\]
\end{theorem}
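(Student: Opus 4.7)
The plan is to express both sides as evaluations of a single definite integral, following the Boros-style approach hinted at in the abstract. Using $\frac{1}{nk+n-1} = \int_0^1 x^{nk+n-2}\,dx$, I would rewrite the left-hand side by interchanging sum and integral and then applying the binomial theorem:
\[
\sum_{k=0}^{n}(-1)^k\binom{n}{k}\frac{1}{nk+n-1} \;=\; \int_0^1 x^{n-2}(1-x^n)^n\,dx.
\]
The identity then reduces to showing that this integral equals the product on the right.

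For that second evaluation, I would make the substitution $u = x^n$, converting the integral into a Beta integral: $\frac{1}{n}\int_0^1 u^{-1/n}(1-u)^n\,du = \frac{1}{n}\,B\!\left(1-\tfrac{1}{n},\,n+1\right) = \frac{n!}{n}\cdot\frac{\Gamma(1-1/n)}{\Gamma(n+2-1/n)}$. Repeatedly applying the recursion $\Gamma(z+1)=z\Gamma(z)$ to telescope the Gamma ratio would give
\[
\frac{\Gamma(1-1/n)}{\Gamma(n+2-1/n)} \;=\; \prod_{j=0}^{n}\frac{1}{\,j+1-1/n\,} \;=\; n^{\,n+1}\prod_{j=0}^{n}\frac{1}{nj+n-1},
\]
and combining this with the prefactor $\frac{n!}{n}$ yields precisely $n^n\,n!\prod_{k=0}^{n}\frac{1}{nk+n-1}$, completing the identification of the two sides.

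The main obstacle is really just careful bookkeeping in the telescoping step: one must verify that exactly $n+1$ factors appear so that $n^{n+1}$ combines with $\frac{n!}{n}$ to produce the stated $n^n \cdot n!$, and that the index range of the resulting product matches the one on the right. A minor technical caveat is the case $n=1$, where the term $\frac{1}{n-1}$ is singular and the statement must either be interpreted via limits or restricted to $n\ge 2$; the case $n=0$ checks directly, as both sides reduce to $-1$.
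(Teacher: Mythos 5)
Your proposal is correct and takes essentially the same route as the paper: both evaluations reduce to the Beta integral $B\left(1-\frac{1}{n},\,n+1\right)$, once via term-by-term integration after the binomial expansion and once via telescoping the Gamma recursion, and your starting integral $\int_0^1 x^{n-2}\left(1-x^{n}\right)^{n}dx$ is just the paper's $\int_0^1 x^{-1/n}\left(1-x\right)^{n}dx$ after the substitution $u=x^{n}$. Your caveat about the singular $n=1$ case (where the term $\frac{1}{n-1}$ and indeed the integral itself blow up) is a legitimate point that the paper's statement, which allows all non-negative $n$, does not address.
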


\begin{proof}
We will evaluate the integral 
\[
\int_{0}^{1}\frac{\left( 1-x\right) ^{n}}{x^{\frac{1}{n}}}dx 
\]%
in two different ways. We start with 
\[
\left( 1-x\right) ^{n}=\sum_{k=0}^{n}\left( -1\right) ^{k}\binom{n}{k}x^{k}, 
\]%
which follows directly from the binomial theorem. Dividing both sides by x$^{%
\frac{1}{n}}$ yields 
\[
\frac{\left( 1-x\right) ^{n}}{x^{\frac{1}{n}}}=\sum_{k=0}^{n}\left(
-1\right) ^{k}\binom{n}{k}x^{k-\frac{1}{n}}, 
\]%
so that 
\[
\int_{0}^{1}\frac{\left( 1-x\right) ^{n}}{x^{\frac{1}{n}}}%
dx=\sum_{k=0}^{n}\left( -1\right) ^{k}\binom{n}{k}\int_{0}^{1}x^{k-\frac{1}{n%
}}dx=\sum_{k=0}^{n}\left( -1\right) ^{k}\binom{n}{k}\frac{n}{nk+n-1}. 
\]%
But the integral $\int_{0}^{1}\frac{\left( 1-x\right) ^{n}}{x^{\frac{1}{n}}}%
dx$ can be evaluated using the fact that 
\[
\int_{0}^{1}x^{m-1}\left( 1-x\right) ^{n-1}dx=B\left( m,n\right) 
\]%
where $B\left( m,n\right) $ is the beta function. We get 
\[
\int_{0}^{1}\frac{\left( 1-x\right) ^{n}}{x^{\frac{1}{n}}}dx=B\left( n+1,1-%
\frac{1}{n}\right) =\frac{\Gamma \left( n+1\right) \Gamma \left( 1-\frac{1}{n%
}\right) }{\Gamma \left( n+2-\frac{1}{n}\right) }, 
\]%
since 
\[
B(m,n)=\frac{\Gamma \left( m\right) \Gamma \left( n\right) }{\Gamma \left(
m+n\right) }, 
\]%
where $\Gamma $ is the gamma function.

We next note that 
\begin{eqnarray*}
\Gamma \left( n+2-\frac{1}{n}\right) &=&\left( n+1-\frac{1}{n}\right) \Gamma
\left( n+1-\frac{1}{n}\right) \\
&=&\left( n+1-\frac{1}{n}\right) \left( n-\frac{1}{n}\right) \Gamma \left( n-%
\frac{1}{n}\right) \\
&=&...=\left( n+1-\frac{1}{n}\right) \left( n-\frac{1}{n}\right) \left( n-1-%
\frac{1}{n}\right) \left( n-2-\frac{1}{n}\right) \\
&&\cdot \cdot \cdot \left( n-\frac{1}{n}\right) \Gamma \left( 1-\frac{1}{n}%
\right) ,
\end{eqnarray*}%
so that (since $\Gamma \left( n+1\right) =n!)$ 
\begin{eqnarray*}
&&\int_{0}^{1}\frac{\left( 1-x\right) ^{n}}{x^{\frac{1}{n}}}dx \\
&=&\frac{n!}{\left( n+1-\frac{1}{n}\right) \left( n-\frac{1}{n}\right)
\left( n-1-\frac{1}{n}\right) \left( n-2-\frac{1}{n}\right) \cdot \cdot
\cdot \left( 1-\frac{1}{n}\right) } \\
&=&\frac{n!}{\dprod\limits_{k=1}^{n+1}\left( k-\frac{1}{n}\right) }%
=\dprod\limits_{k=1}^{n+1}\left( nk-1\right) .
\end{eqnarray*}%
Thus, 
\[
\sum_{k=0}^{n}\left( -1\right) ^{k}\binom{n}{k}\frac{n}{nk+n-1}=\frac{%
n^{n+1}n!}{\dprod\limits_{k=1}^{n+1}\left( nk-1\right) }=n^{n}n!\dprod%
\limits_{k=0}^{n}\frac{1}{nk+n-1}. 
\]%
so that,%
\[
\sum_{k=0}^{n}\left( -1\right) ^{k}\binom{n}{k}\frac{1}{nk+n-1}=\frac{n^{n}n!%
}{\dprod\limits_{k=1}^{n+1}\left( nk-1\right) }=n^{n}n!\dprod%
\limits_{k=0}^{n}\frac{1}{nk+n-1}. 
\]
\end{proof}

\begin{corollary}
Let a be a real number. Then
\end{corollary}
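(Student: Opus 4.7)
The corollary is almost certainly the natural continuous generalization
$$\sum_{k=0}^{n}(-1)^{k}\binom{n}{k}\frac{1}{k+a}=n!\prod_{k=0}^{n}\frac{1}{k+a},$$
valid for every real $a$ that is not a non-positive integer in $\{0,-1,\dots,-n\}$. Note that specializing $a=(n-1)/n$ and pulling out factors of $n$ recovers Theorem~1, so this is the expected generalization.

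The plan is to imitate the proof of Theorem~1 with the substitution $-1/n \rightsquigarrow a-1$. First I would assume $a>0$ so that the integral
$$I(a)=\int_{0}^{1}(1-x)^{n}x^{a-1}\,dx$$
converges at $x=0$. Expanding $(1-x)^{n}$ by the binomial theorem and integrating term by term gives
$$I(a)=\sum_{k=0}^{n}(-1)^{k}\binom{n}{k}\int_{0}^{1}x^{k+a-1}\,dx=\sum_{k=0}^{n}(-1)^{k}\binom{n}{k}\frac{1}{k+a},$$
which is the left-hand side of the identity.

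Next I would evaluate $I(a)$ as a beta integral: $I(a)=B(a,n+1)=\Gamma(a)\,\Gamma(n+1)/\Gamma(a+n+1)$. Using the functional equation $\Gamma(z+1)=z\Gamma(z)$ repeatedly,
$$\Gamma(a+n+1)=(a+n)(a+n-1)\cdots(a+1)a\,\Gamma(a)=\Gamma(a)\prod_{k=0}^{n}(k+a),$$
which, combined with $\Gamma(n+1)=n!$, gives $I(a)=n!\prod_{k=0}^{n}(k+a)^{-1}$. Equating the two evaluations establishes the corollary for $a>0$.

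The one obstacle is extending the identity to all admissible real $a$, since the integral representation is only valid for $a>0$. This is routine: both sides are rational functions of $a$ with poles exactly at $a\in\{0,-1,\dots,-n\}$, and two rational functions that agree on the positive half-line must agree identically. Alternatively, one can clear denominators on each side and observe that the resulting polynomial identity in $a$ holds on an infinite set, hence everywhere. Either way, the corollary follows with no new computation beyond what the beta-integral argument already supplies.
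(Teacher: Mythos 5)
Your reconstruction of the missing statement is a reasonable guess, and your proof of the identity $\sum_{k=0}^{n}(-1)^{k}\binom{n}{k}\frac{1}{k+a}=n!\prod_{k=0}^{n}\frac{1}{k+a}$ is correct: the term-by-term integration, the beta-function evaluation $B(a,n+1)=\Gamma(a)\Gamma(n+1)/\Gamma(a+n+1)$, the telescoping of the functional equation, and the rational-function continuation to all admissible real $a$ are all sound. The paper's actual corollary consists of three enumerated identities with denominators $nk+an-1$, $ank+n-1$ and $ank+an-1$, and its entire proof is the sentence ``This is clear,'' so there is no argument to compare yours against; implicitly the author intends the reader to re-run the proof of Theorem 1 with the exponent $-\frac{1}{n}$ replaced by the appropriate parameter. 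Your master identity does subsume all three items: writing $nk+an-1=n\left(k+a-\frac{1}{n}\right)$, $ank+n-1=an\left(k+\frac{n-1}{an}\right)$ and $ank+an-1=an\left(k+1-\frac{1}{an}\right)$, and pulling the constant factors out of both the sum and the product, reduces each item to your identity with $a$ replaced by $a-\frac{1}{n}$, $\frac{n-1}{an}$ and $1-\frac{1}{an}$ respectively. (Carrying this out also shows that item 3 as printed contains a typo: the product on the right-hand side should run over $ank+an-1$, not $ank+n-1$.) So your approach is the same beta-integral technique as the paper's Theorem 1, but organized around a single clean parameterized identity from which the stated cases follow by rescaling; what it buys is an actual proof where the paper supplies none, together with explicit attention to the excluded values of $a$ (the poles of both sides), which the paper's blanket hypothesis ``let $a$ be a real number'' overlooks.
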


\begin{enumerate}
\item 
\[
\sum_{k=0}^{n}\left( -1\right) ^{k}\binom{n}{k}\frac{1}{nk+an-1}%
=n^{n}n!\dprod\limits_{k=0}^{n}\frac{1}{nk+an-1} 
\]

\item 
\[
\sum_{k=0}^{n}\left( -1\right) ^{k}\binom{n}{k}\frac{1}{ank+n-1}=\left(
an\right) ^{n}n!\dprod\limits_{k=0}^{n}\frac{1}{ank+n-1} 
\]

\item 
\[
\sum_{k=0}^{n}\left( -1\right) ^{k}\binom{n}{k}\frac{1}{ank+an-1}=\left(
an\right) ^{n}n!\dprod\limits_{k=0}^{n}\frac{1}{ank+n-1} 
\]
\end{enumerate}

\begin{proof}
This is clear.
\end{proof}

The following identity was confirmed via Mathematica

$\ \frac{d}{da}\dprod\limits_{k=0}^{3}\frac{1}{nk+an-1}\allowbreak
\allowbreak $ : $\frac{n^{4}}{\left( an-1\right) ^{2}\left( n+an-1\right)
^{2}\left( 2n+an-1\right) ^{2}\left( 3n+an-1\right) ^{2}}\allowbreak $ by
noting that 
\[
\frac{d}{da}\frac{1}{nk+an-1}=-\frac{1}{\left( nk+an-1\right) ^{2}}n. 
\]%
The following might be interesting to investigate:

\begin{conjecture}
Let n be a natural number. Suppose f and g are differentiable functions.
Then 
\[
\sum_{k=0}^{n}\left( -1\right) ^{k}\binom{n}{k}\frac{1}{f\left( n\right)
k+g\left( n\right) }=n^{n}n!\dprod\limits_{k=0}^{n}\frac{1}{f\left( n\right)
k+g\left( n\right) }.
\]
\end{conjecture}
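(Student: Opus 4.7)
The plan is to generalize the integral technique used to prove Theorem 1. That theorem corresponds to the specific choice $f(n) = n$ and $g(n) = n-1$, and is established by evaluating $\int_0^1 (1-x)^n x^{-1/n}\, dx$ in two ways; note that the exponent $-1/n$ equals $g(n)/f(n) - 1$ under that choice. The natural generalization is therefore to consider
\[ I(n) = \int_0^1 (1-x)^n\, x^{g(n)/f(n) - 1}\, dx \]
and to evaluate $I(n)$ in two different ways along exactly the lines of the original proof.

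First, I would expand $(1-x)^n$ by the binomial theorem and integrate term by term, producing a sum of the shape $\sum_{k=0}^n (-1)^k \binom{n}{k}\, \alpha/(f(n)k + g(n))$ for an explicit $\alpha$ depending on $f(n)$; this recovers the left-hand side of the conjecture, up to the prefactor $\alpha$. Second, I would invoke the Beta function identity $I(n) = B(g(n)/f(n),\, n+1)$, convert it to a ratio of Gamma functions, and telescope using $\Gamma(z+1) = z\Gamma(z)$ to obtain an expression of the form $\beta/\prod_{k=0}^n (f(n)k + g(n))$ for another explicit constant $\beta$. Equating the two evaluations and dividing through by $\alpha$ isolates the left-hand side and rewrites it as $C(f,g,n) \cdot n! \prod_{k=0}^n 1/(f(n)k + g(n))$ for some constant $C$ to be pinned down.

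The main obstacle is the final reconciliation step: one needs to show that $C(f,g,n)$ is precisely $n^n$, as the conjecture claims. Since the integral-and-Beta calculation treats $f(n)$ and $g(n)$ as generic parameters, the constant that most naturally emerges from a bare manipulation is a function of $f(n)$ rather than purely of $n$, and the stated $n^n$ does not drop out for free. This is presumably the point at which the differentiability hypothesis on $f$ and $g$ (which plays no visible role in Theorem 1 itself) must enter. A plausible route is to verify the identity in the base case $f(n) = n$, $g(n) = n-1$ via Theorem 1, and then extend it to arbitrary differentiable $f,g$ by a continuation or parameter-deformation argument, checking that both sides respond identically to infinitesimal perturbations of $f$ and $g$ (for this, the derivative formula $\tfrac{d}{da}(nk+an-1)^{-1} = -n(nk+an-1)^{-2}$ already recorded in the paper for the three-term product is likely a prototype of the computation that must be generalized). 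Making this deformation rigorous, and thereby bridging the naturally-arising $f(n)$-dependence of the integral calculation to the stated factor $n^n$, is where I expect the real difficulty to lie.
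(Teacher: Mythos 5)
The paper offers no proof of this statement --- it is explicitly posed as an open conjecture --- so there is nothing to compare your argument against; instead, note that your own computation, if carried to completion, \emph{disproves} the conjecture as stated. Your Beta-function evaluation is correct: with $\alpha = f(n)$ one gets
\[
\sum_{k=0}^{n}(-1)^{k}\binom{n}{k}\frac{1}{f(n)k+g(n)}
=\frac{1}{f(n)}\,B\!\left(\frac{g(n)}{f(n)},\,n+1\right)
=f(n)^{n}\,n!\prod_{k=0}^{n}\frac{1}{f(n)k+g(n)},
\]
so the constant $C$ you are trying to pin down is $f(n)^{n}$, not $n^{n}$. You correctly sense that ``the stated $n^{n}$ does not drop out for free,'' but the proposed rescue by parameter deformation from the base case $f(n)=n$ cannot succeed, because the two sides genuinely differ whenever $f(n)^{n}\neq n^{n}$. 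A concrete counterexample: take $n=1$, $f\equiv 2$, $g\equiv 1$. The left side is $\frac{1}{1}-\frac{1}{3}=\frac{2}{3}$, while the right side is $1^{1}\cdot 1!\cdot\frac{1}{1}\cdot\frac{1}{3}=\frac{1}{3}$. (The corrected constant $f(1)^{1}=2$ gives $2\cdot\frac{1}{3}=\frac{2}{3}$, as it should.)

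Two further remarks. First, the differentiability hypothesis is a red herring: the corrected identity
\[
\sum_{k=0}^{n}(-1)^{k}\binom{n}{k}\frac{1}{fk+g}=f^{n}\,n!\prod_{k=0}^{n}\frac{1}{fk+g}
\]
is an identity of rational functions in $f$ and $g$, valid whenever no denominator vanishes; once it is established for $g/f>0$ via the Beta integral (or directly by partial fractions / induction), it extends to all admissible $f,g$ by clearing denominators and comparing polynomials, with no continuity or derivative argument needed. Second, this explains why the paper's Corollary holds in items where $f(n)=n$ (constant $n^{n}$) but switches to $(an)^{n}$ in the items where $f(n)=an$. The right deliverable here is therefore not a proof of the conjecture but a counterexample together with the corrected statement, in which $n^{n}$ is replaced by $f(n)^{n}$.
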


\end{document}